\newtheorem{thm}{Theorem}[section]
\newtheorem{prop}[thm]{Proposition}
\newtheorem{example}[thm]{Example}
\newtheorem{lemma}[thm]{Lemma}
\begin{document}

\title{Metric properties of the braided Thompson's groups}

\author{Jos\'e Burillo}
\address{
Departament de Matem\'atica Aplicada IV, Universitat Polit\'ecnica
de Catalunya, Escola Polit{\`e}cnica Superior de Castelldefels,
08860 Castelldefels, Barcelona, Spain} \email{burillo@mat.upc.es}

\author{Sean Cleary}
\address{Department of Mathematics,
The City College of New York \& The CUNY Graduate Center, New York,
NY 10031} \email{cleary@sci.ccny.cuny.edu}

\thanks{The authors are grateful for the hospitality of the
Centre de Recerca Matem\`atica. 
The first author acknowledges support from MEC grant \#MTM2006-13544-C02.
The second author acknowledges support from
the National Science Foundation and from PSC-CUNY Research Award \#69034}



\date{\today}



\begin{abstract}
Braided Thompson's groups are finitely presented groups introduced
by Brin and Dehornoy which contain the ordinary braid groups $B_n$,
the finitary braid group $B_{\infty}$ and Thompson's group $F$ as
subgroups. We describe some of the metric properties of  braided
Thompson's groups and give upper and lower bounds for word length in
terms of the number of strands and the number of crossings in the
diagrams used to represent elements.
\end{abstract}

\maketitle

\section*{Introduction}
Thompson's groups form interesting examples of a range of unusual
group-theoretic phenomena.  Their metric properties are understood
to varying extents of completeness.  A natural notion of ``size'' of
an element in these groups is the size in terms of the number of
nodes in the smallest tree pair diagram which can represent that
element. Fordham \cite{blakegd} developed an effective method for
computing the word length of elements of Thompson's group $F$
exactly from tree pair representations, which has led to good
understanding of both fine-scale and large-scale geometry of the
Cayley graph of $F$. Burillo, Cleary, Stein and Taback \cite{thompt}
give estimates for the word length in Thompson's group $T$, showing
that the word length is quasi-isometric to the number of nodes in
the smallest representative of a group element.    This has led to
some understanding of the large-scale geometry of $T$. The metric
properties of Thompson's group $V$ are less well-understood.  Birget
\cite{birget} gives lower and upper bounds in terms of the number of
nodes of minimal representatives-- the lower bound is linear in the
number of nodes, and the upper bound is proportional to $n \log{n}$,
where $n$ is the number of nodes in the minimal representative. This
upper bound is sharp in the sense that the fraction of elements
which have lengths close to the $n \log n$ bound converges quickly
to 1 as $n$ increases.  So there is a gap between the lower and
upper bounds which cannot be bridged merely by looking at the size
of representative diagrams.  Here, we consider the metric properties
of the braided Thompson's groups $BV$ and $\widehat{BV}$. These
groups have a great deal in common with Thompson's groups $V$ and
can be regarded as extensions of $V$.  Furthermore, they have some
properties in common with the finitely generated braid groups, whose
metric properties with respect to the standard generators are
well-understood in terms of the number of crossings in a minimal
diagram.

Below, we give upper and lower bounds for the word lengths of
elements of braided Thompson's groups $BV$ and $\widehat{BV}$ with
respect to their standard generating sets.  We give bounds based on
the number of nodes and the number of crossings, and we give some
sharper bounds based on the number of crossings of a single pair of
strands. We give examples of families of elements where the number of
crossings is significantly larger than the length, and use these examples
to show that the bounds on word length are optimal in terms of order of growth.

\section{Background on braided Thompson's groups}

Thompson's group $V$  is constructed
as a group of piecewise linear maps of the interval $[0,1]$, not
necessarily continuous, but which are linear in open intervals bounded by
dyadic rationals.  Cannon, Floyd and Parry \cite{cfp} give an excellent
introduction with complete details.
Elements in the group $V$
can be seen as pairs of rooted binary trees, with a permutation defining how
leaves are mapped to each other.
 The related group $\widehat{V}$ is
a subgroup and a supergroup of $V$ described in Brin \cite{brinbv1} which has
some aspects which make presentation and analysis more straightforward.

The standard ``Artinification" construction that replaces permutations
by braids is used to construct the braided versions $BV$ and
$\widehat{BV}$. See Brin \cite{brinbv1} and Dehornoy \cite{dehornoy} for details
of the constructions.

We construct a braided tree pair diagram representative in ``tree-braid-tree" form of an element of $BV$
as a pair of rooted binary trees, each with $n$ nodes, and a
braid with $n+1$ strands, thought of as joining the leaves of both
trees via the given braiding, with the second tree positioned upside down. See Figure \ref{bvelem} for
an example of an element of $BV$ in tree-braid-tree form.

\begin{figure}
\includegraphics[width=2in]{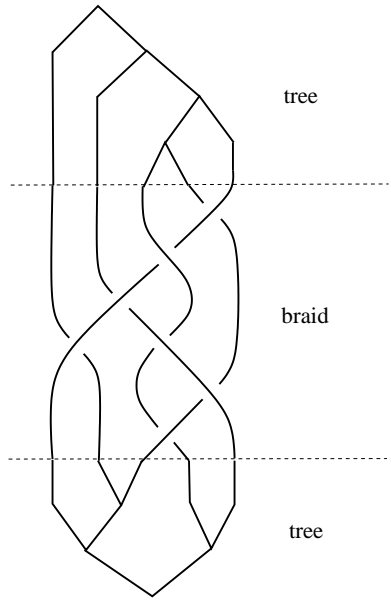}\\
\caption{A diagram for an element in $BV$}
\label{bvelem}
\end{figure}

An element in $BV$ is an equivalence class of diagrams, where the
equivalence is given by replacing two corresponding leaves of the
two trees by nodes, and splitting the corresponding strand into two
parallel strands.
 Using this
operation and its inverse operation of collapsing parallel strands which have
immediate common parent nodes in both trees, we obtain elements as
equivalence classes of braided tree pair diagrams.
There is a unique minimal size representative in each equivalence class.
 We multiply elements
by concatenation of two appropriately chosen tree-braid-tree diagram representatives.
By successive splittings of strands, we make the bottom tree of the first representative coincide with the top
tree of the second representative, and then we
cancel those trees and concatenate the resulting braids. An
example of multiplying elements of $BV$ is shown in Figure \ref{bvmult}. The interested reader can see many explicit details and examples in Brady, Burillo, Cleary and Stein
\cite{bbcs}, as well as the work of Brin \cite{brinbv1} and
Dehornoy \cite{dehornoy}.

\begin{figure}
\includegraphics[width=4in]{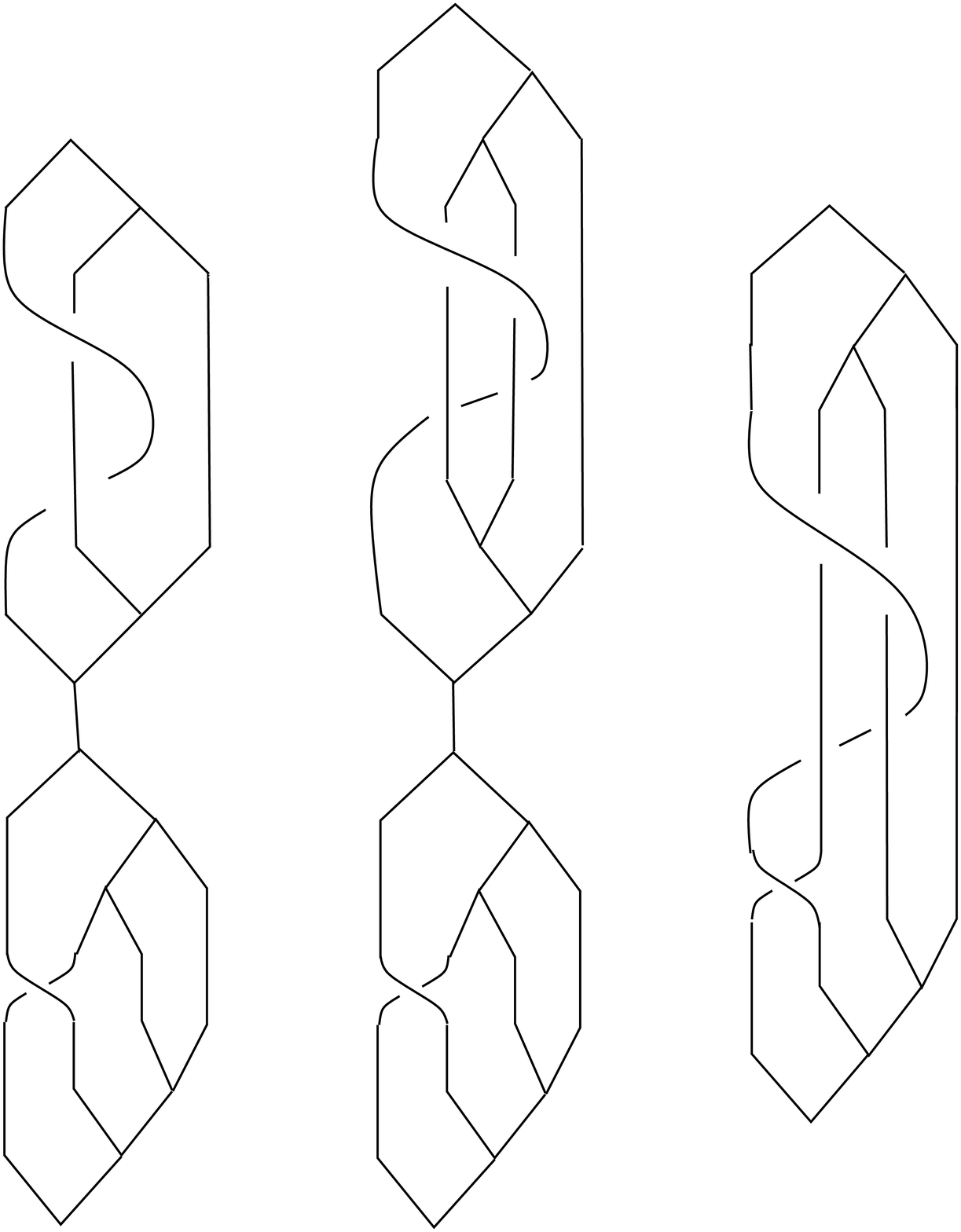}\\
\caption{An example of multiplication in $BV$}
\label{bvmult}
\end{figure}

We can consider, as done in \cite{bbcs}, the group
$\widehat{BV}$ as the subgroup of $BV$ that has the last strand
unbraided. Though our results below are all stated for $BV$, there are
immediate natural analogous results for $\widehat{BV}$ which simply omit the generators $\tau_i$ and any relations involving $\tau_i$. Presentations for both groups are given in \cite{bbcs}; here is a presentation for $BV$.

\begin{thm}[Brady, Burillo, Cleary, Stein \cite{bbcs}]
\label{bvinfpres}$BV$ has generators
 $x_i$,  $\sigma_i$, $\tau_i$, and relators
\begin{itemize}
\item [(A)]$x_j x_i=x_i x_{j+1} \text{  for }j > i$ \item
[(B1)]$\sigma_i \sigma_j=\sigma_j \sigma_i \text{  for }j-i \geq 2$
\item [(B2)]$\sigma_i \sigma_{i+1} \sigma_i = \sigma_{i+1} \sigma_i
\sigma_{i+1}$ \item [(B3)]$\sigma_i \tau_j = \tau_j \sigma_i\text{
for }j-i \geq 2$ \item [(B4)]$\sigma_i \tau_{i+1} \sigma_i =
\tau_{i+1} \sigma_i \tau_{i+1}$. \item [(C1)]$\sigma_i
x_j=x_j\sigma_i,\text{ for  }i<j$ \item [(C2)]$\sigma_i x_i=x_{i-1}
\sigma_{i+1} \sigma_{i}$ \item [(C3)]$\sigma_i x_j=x_j
\sigma_{i+1},\text{ for  }i \geq j+2$ \item
[(C4)]$\sigma_{i+1}x_i=x_{i+1}\sigma_{i+1}\sigma_{i+2}$ \item
[(D1)]$\tau_i x_j = x_j \tau_{i+1},\text{ for  }i-j \geq 2$ \item
[(D2)]$\tau_i x_{i-1}= \sigma_i \tau_{i+1}$ \item
[(D3)]$\tau_i=x_{i-1}\tau_{i+1}\sigma_i$.
\end{itemize}
\end{thm}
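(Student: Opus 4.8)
The plan is to prove the presentation in the two standard steps: first that every listed relator holds in $BV$, and then that these relators suffice. Let $G=\<x_i,\sigma_i,\tau_i\mid R\>$ denote the abstractly presented group, where $R$ is the full list of relators (A)--(D3), and let $\phi\colon G\to BV$ be the homomorphism sending each generator to the evident elementary braided tree-pair diagram: the $x_i$ to the caret generators that build the trees (the copy of Thompson's group $F$ inside $BV$), and the $\sigma_i,\tau_i$ to the elementary crossings, the $\tau_i$ being the generators that braid the final strand, which are exactly those absent from $\widehat{BV}$. First I would check that $\phi$ is well defined by realizing the two sides of each relator as braided tree-pair diagrams and verifying they agree after the allowed splitting and collapsing moves: the relations (A) are the defining relations of $F$ among the carets, the relations (B1)--(B4) are the Artin braid relations among the elementary crossings, and (C1)--(D3) record how a crossing slides past a caret, with the index shifts and the ``doubling'' in (C2), (C4), (D2) and (D3) reflecting the splitting of a strand when a caret is expanded. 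Surjectivity of $\phi$ is immediate, since every braided tree-pair diagram is assembled from carets and elementary crossings.

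The heart of the matter is injectivity, for which I would introduce a \emph{normal form} mirroring the geometric tree-braid-tree picture. Using the sliding and commutation relations (C1)--(C4) and (D1)--(D3) as rewriting rules, I would rewrite an arbitrary word of $G$ into the shape $P\,W\,N^{-1}$, where $P$ and $N$ are positive words in the $x_i$ encoding the top and bottom trees and $W$ is a word in the $\sigma_i,\tau_i$ encoding the braiding on the common strands. The content of (C) and (D) is exactly that a braid letter can be pushed through a caret at the cost of re-indexing and possibly splitting into two letters, so that all tree letters collect at the two ends and all braid letters in the middle. I would then reduce $P$ and $N$ to canonical positive $F$-words representing specific reduced binary trees using (A), and reduce $W$ to a canonical Artin form using (B1)--(B4) relative to the resulting strand count.

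To conclude, I would argue that this combinatorial normal form is a complete invariant. Since there is a unique minimal tree-braid-tree representative in each equivalence class, it is enough to match the normal form with that minimal diagram: collapse both trees to reduced form, reduce the braid, and check that these reductions are precisely compatible with the splitting and collapsing equivalence on diagrams. Two words of $G$ with the same image in $BV$ would then share the same normal form, so $\phi$ is injective and hence an isomorphism.

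The hard part will be showing that the passage to normal form is unambiguous, that is, that the tree-braid-tree form produced does not depend on the order in which the sliding relations are applied. The delicate point is the interaction between strand-splitting and the braid relations, since pushing a braid letter through a caret can manufacture a pair of adjacent crossings, and one must verify that the competing ways of reconciling such choices agree modulo $R$ --- equivalently, that the combinatorial moves induced by (A)--(D3) generate exactly the splitting/collapsing equivalence together with Artin equivalence and nothing more. Establishing this confluence, or what amounts to the same thing, identifying the normal form with the unique minimal diagram, is where essentially all of the work lies.
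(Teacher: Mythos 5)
First, a point of context: the paper you are working from does not prove this theorem at all; it is imported verbatim from Brady--Burillo--Cleary--Stein \cite{bbcs} as background, so there is no in-paper argument to compare yours against. Judged on its own merits, your proposal correctly identifies the standard two-step strategy, and the easy half is fine: verifying that each relator (A)--(D3) holds among the elementary diagrams is a finite diagrammatic check, and surjectivity of $\phi$ is immediate since every braided tree-pair diagram factors as (top tree)(braid)(inverse of bottom tree), hence as a product of carets and elementary crossings.

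However, there is a genuine gap, and you have named it yourself: injectivity of $\phi$, equivalently completeness of the relator list, is deferred entirely (``establishing this confluence \ldots{} is where essentially all of the work lies''). That is not a technical loose end; it is the entire content of the theorem. Concretely, two things are missing. First, to bring an arbitrary word of $G$ to tree-braid-tree form $P\,W\,N^{-1}$ you need, besides (C1)--(D3), derived rules for pushing \emph{negative} caret letters $x_i^{-1}$ past braid letters and past positive carets; you never write these down or show they follow from $R$. Second, and more seriously, tree-braid-tree form is far from unique: two such words represent the same element of $BV$ exactly when the corresponding diagrams admit a common expansion under strand splitting. So you must prove that the splitting move --- replacing a strand of the braid $W$ by two parallel strands while adding a caret to both $P$ and $N$ --- is itself a consequence of the relators (this is the real role of (C2), (C4), (D2), (D3), which double a crossing when a strand through it is split), and then run a common-expansion/diamond argument to conclude that any two tree-braid-tree words for the same element of $BV$ are equal in $G$. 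Without this, your appeal to ``the unique minimal representative'' only restates what must be proved: minimality and the splitting/collapsing equivalence are facts about diagrams in $BV$, not about the abstract group $G$, and the whole question is whether the relators capture that equivalence. As it stands, your text is a plan for a proof with the decisive lemma left open.
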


In this presentation, the generators $x_i$ correspond to the
generators of Thompson's group $F$ (see \cite{cfp}). The generators
$\sigma_i$ and $\tau_i$ are the braid generators, and all of them are
given by tree-pair-tree diagrams where the braid is a one-crossing braids
and the trees are all-right trees (trees which only have right child nodes.) The braided generators are:

\begin{itemize}
\item generators $\sigma_i$: crosses strand $i$ over strand $i+1$ between all-right trees
with $i+2$ leaves,
\item generators $\tau_i$: crosses strand $i$ over strand $i+1$ between all-right trees
with $i+1$ leaves.
\end{itemize}

The subgroup $\widehat{BV}$ is generated by all generators which have the
last strand unbraided; that is, given by words  $x_i$ and $\sigma_i$.

These presentations are infinite, but both groups admit finite
presentations.  We use  finite generating sets for understanding the metrics
on these groups. The group $BV$ is generated by $x_0$, $x_1$,
$\sigma_1$ and $\tau_1$, and its subgroup $\widehat{BV}$ is
generated by  $x_0$, $x_1$, and
$\sigma_1$.  Figure \ref{gens} shows the standard four generators.

\begin{figure}
\includegraphics[width=4.5in]{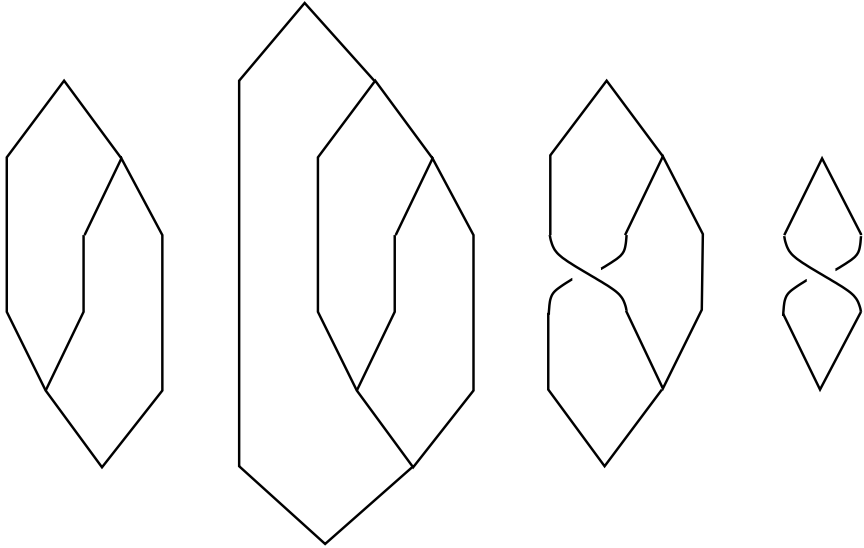}\\
\caption{The generators of $BV$, from left to right, $x_0$, $x_1$,
$\sigma_1$, $\tau_1$} \label{gens}
\end{figure}

A word in the generators of $BV$ corresponds potentially
to the concatenation of many tree-braid-tree diagrams, but it can always be
rewritten, using the relators, as a word which corresponds to a
single tree-braid-tree diagram, see \cite{bbcs}. Such a word would
be of the type
$$
(x_{i_1}^{r_1}x_{i_2}^{r_2}\ldots x_{i_n}^{r_n})(
Br)(x_{j_m}^{-s_m}\ldots x_{j_2}^{-s_2}x_{j_1}^{-s_1})
$$
where $Br$ is a braid, expressed as product of generators $\sigma_i$
and $\tau_i$. The minimal (reduced) representative diagram as a word in the
generators for an
element, will be used to compute and to estimate the metric. Of
course, the minimal length word for an element may likely not
correspond to a diagram given in tree-braid-tree form.

For each $n$, the ordinary $n$-strand braid groups $B_n$, generated
by the crossing of adjacent strands (usually denoted $\sigma_i$),
naturally are all subgroups of $BV$ merely by considering the
$n$-strands as the leaves of all-right trees.

\section{Motivating examples.}

\begin{figure}
\includegraphics[width=6.3in,height=8.3in]{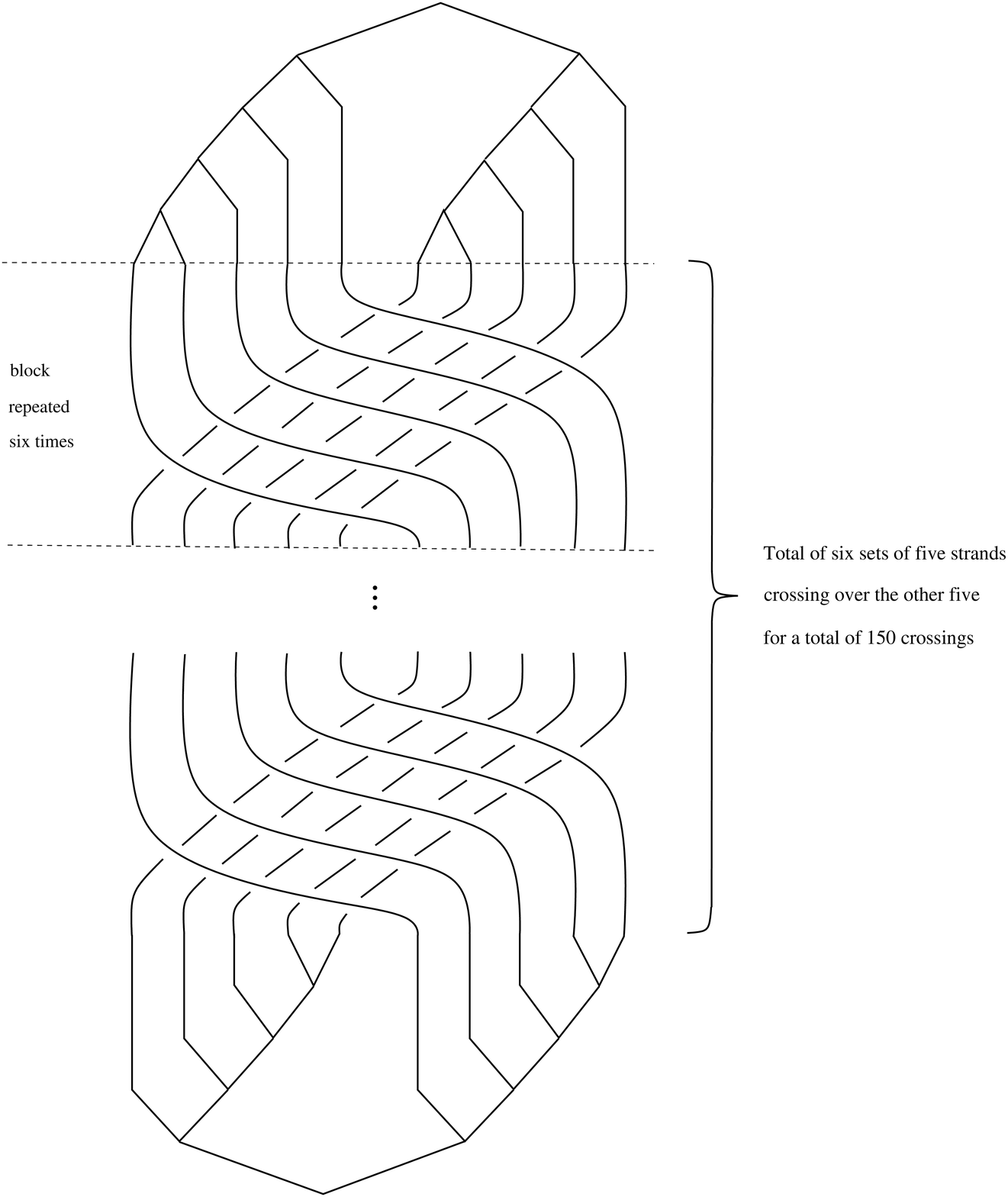}\\
\caption{The element $(x_1\tau_1)^6$, with word length 12, with 9
nodes and 6 blocks where the first 5 strands go over the last 5
strands, giving a total of 150 crossings.} \label{xtaupicture}
\end{figure}

In Thompson's group $F$, whose elements are pairs of
trees, word length is  proportional to the number of nodes in the trees.
In the ordinary $n$-strand braid groups, word length with respect to the
standard $n-1$ generators is given by the number of crossings in a minimal
braid diagram.  Thus, it is reasonable to suspect in our setting,
where the elements are combinations of trees and
braids, the number of nodes and number of crossings
have to be considered. The main difference is that here, if the  relevant trees
are not of the same shape, we may need
to split strands to be able to multiply elements, and splitting strands will
increase the total number of crossings.  The following examples show that
there can be interactions which rule out the possibility that word length
is estimated simply as being proportional to the number of strands and
the number of crossings.

\begin{example} \label{x1tau1} The reduced tree-braid-tree diagram
for the element $(x_1\tau_1)^{2n}$ has $2n+3$ nodes and $2n(n+2)^2$
crossings.
\end{example}

This follows from a standard induction argument on $n$. For $n=1$ we see
5 nodes and 18 crossings. It is not hard to check the values for a
general $n$ once we see the particular shape of these elements-- the interleaved
splitting and braiding quickly leads to diagrams with many crossings. See
Figure \ref{xtaupicture} for the example of the element $(x_1\tau_1)^6$, which
has word length 12, and a total of 150 crossings

Another family of examples of interesting elements is given by the
families of  Garside elements, which are half-twists of all strands.
These elements and their powers can play a central role in various
normal forms for ordinary braids, see Birman and Brendle
\cite{birmanbrendle}. The Garside element $\Delta_n= \sigma_1
\sigma_2 \cdots \sigma_n \sigma_1 \cdots \sigma_{n-1} \cdots
\sigma_1 \sigma_2 \sigma_1$ is the longest positive permutation
braid in $B_n$ and the left-multiple by $x_0^{-3}$ of the Garside element $\Delta_4$ is shown as an
element of $BV$ in Figure \ref{garsidepic}.  Each strand crosses
every other strand exactly once, giving ${n \choose 2}$ crossings
and thus the word length in $B_n$ is quadratic in the number of
strands, even when the number of generators grows linearly with the
number of strands.  Direct substitution of those expressions into
generators of $BV$ to find word length in $BV$ is far from optimal,
though, as shown by this example.

\begin{example} \label{garside}
The  Garside element $\Delta_{n+1}$ in the standard $B_{n+1}$ subgroup
of $BV$ has length at most $6n-7$ in $BV$ for $n\geq 2$.
\end{example}

This is again seen easily.  The Garside element $\Delta_2$ in $B_2$ is exactly $\tau_1$, and we see that products of the form $\tau_1 \tau_2 \cdots \tau_n= x_0^{n-1} \Delta_{n+1}$ giving that $\Delta_{n+1}=   x_0^{1-n} \tau_1 \cdots \tau_n$.
To substitute in terms of the standard finite generating set, first we note that
$\tau_n = x_0^{2-n} \tau_2 x_0^{n-2}$ for $n \geq 2$ and then that $\tau_2= x_0^{-1} \tau_1 \sigma_1^{-1}$.
Substituting to express $\Delta_{n+1}$ in terms of the finite generating set, we have  $\Delta_{n+1} = x_0^{1-n} \tau_1 \tau_2 (x_0^{-1} \tau_2 x_0)  (x_0^{-2} \tau_2 x_0^2) \cdots  (x_0^{-n+2} \tau_2 x_0^{n-2})$ which has length $6n-7$ when $\tau_2$ is expressed in terms of the 4 generators, for $n\geq 2$.

So in the $n-1$ generator subgroup $B_n$ of $BV$, the length of $\Delta_n$ grows
quadratically with $n$, yet in $BV$ itself, with only 4 generators, we can obtain the Garside element $\Delta_n$ as the  product of generators whose length grows at most linearly with the number of generators.

\begin{figure}
\includegraphics[width=3.5in]{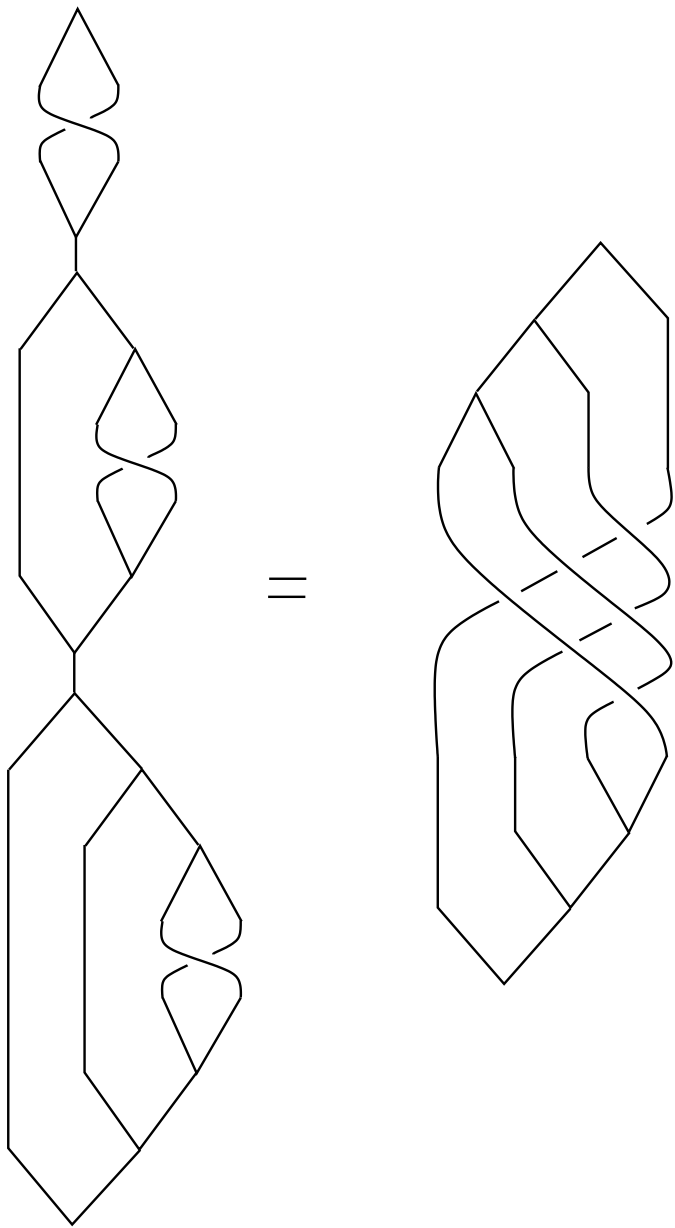}\\
\caption{Constructing the Garside element $\Delta_4$ as a product of $\tau_i$.
Here we show $\tau_1 \tau_2 \tau_3 = x_0^{-3} \Delta_4$.} \label{garsidepic}
\end{figure}

\section{Metric properties}

The examples in the previous section indicate that overall,
 the number of crossings is not a
good indicator of the length, since there are elements with large
number of crossings and small length. But the number of crossings
can still give an upper bound on the length.


\begin{thm}
Given an element  $w$ in $BV$ or $\widehat{BV}$  which has a reduced
tree-pair-tree diagram representative with $n$ nodes and $k$ crossings then there
exists a constant $C$ such that the length $|w|$  is at most
$C(n+nk)$.
\end{thm}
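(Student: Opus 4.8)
The plan is to bound the word length by explicitly rewriting a single tree-braid-tree diagram in terms of the standard finite generating set, accounting separately for the tree part and the braid part. Given $w$ with a reduced representative of the form $(x_{i_1}^{r_1}\cdots x_{i_n}^{r_n})(Br)(x_{j_m}^{-s_m}\cdots x_{j_1}^{-s_1})$, the two tree-words are $F$-elements determined by trees with $n$ nodes each, so their length in the standard generators $x_0,x_1$ is $O(n)$ by Fordham's result for $F$. This handles the outer factors and contributes the linear $O(n)$ term. The remaining task is to bound the length of the braid $Br$, which lives in a braid group on at most $n+1$ strands and has $k$ crossings, in terms of the finite generating set $\{x_0,x_1,\sigma_1,\tau_1\}$ of $BV$.

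The key step is to express each of the infinitely many braid generators $\sigma_i$ and $\tau_i$ appearing in $Br$ in terms of the four finite generators. Each single crossing $\sigma_i$ or $\tau_i$ can be conjugated down to $\sigma_1$ or $\tau_1$ using the $x_0$-conjugation relations, exactly as was done for the $\tau_n$ in the Garside computation of Example \ref{garside}: relations of type (C) and (D) let one write $\sigma_i$ and $\tau_i$ as a word of the form $x_0^{-(i-1)}\sigma_1 x_0^{i-1}$ (and analogously for $\tau_i$), up to bounded correction terms. Since a crossing $\sigma_i$ lives on strands $i,i+1$ out of at most $n+1$ strands, the index $i$ is at most $n$, so each single crossing costs at most $O(n)$ letters in the finite generating set. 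As there are $k$ crossings total in $Br$, the braid factor contributes at most $O(nk)$, and combining with the $O(n)$ tree contribution yields $|w|\le C(n+nk)$.

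The main obstacle will be controlling the conjugating exponents cleanly. When one rewrites the product of $k$ individual crossings one after another, the conjugating powers of $x_0$ could in principle telescope or accumulate rather than staying bounded by $n$ per crossing; the careful point is to insert $x_0^{i-1}x_0^{-(i-1)}=1$ between consecutive crossings so that each crossing is independently conjugated down to an index-$1$ generator at a cost of at most $2(i-1)+O(1)\le O(n)$ letters, with the cancellations between adjacent conjugating blocks only helping. One must also confirm that the strand indices appearing in $Br$ never exceed $n$ despite splittings; but since the reduced diagram has exactly $n$ nodes, its braid has at most $n+1$ strands, so every index $i$ satisfies $i\le n$. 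With these bounds in place the estimate is a direct summation: $O(n)$ for the trees plus $k\cdot O(n)$ for the crossings gives the claimed $C(n+nk)$.
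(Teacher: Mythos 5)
Your proposal is correct and follows essentially the same route as the paper's proof: split the reduced tree-braid-tree diagram into a positive $F$-word, a braid between all-right trees, and a negative $F$-word, bound the $F$-parts by $O(n)$, then decompose the braid into its $k$ individual crossings, each a generator $\sigma_i$ or $\tau_i$ with $i\le n$ costing $O(n)$ letters in the finite generating set via the $x_0$-conjugation relations. The extra details you supply (the explicit conjugation formulas, the insertion of cancelling powers of $x_0$, and the observation that the strand indices are bounded by $n+1$) are exactly the points the paper leaves implicit when it says one need only ``verify'' that each $\sigma_i$, $\tau_i$ has length linear in $n$.
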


\begin{proof}
We consider then the reduced tree-braid-tree diagram of the element
$w$. It can be split into three diagrams using all-right trees,
giving a split into a positive element of $F$, a braid on two
all-right trees, and a negative element of $F$. The elements of $F$
have length at most proportional to $n$ (see \cite{bcs}). For the middle
braid in all-right trees, and with $k$ crossings, we can split it
into $k$ elements, each one of them with the same all-right tree on
top and bottom, and one single crossing. Each one of these elements
is then a representative for a generator $\sigma_i$ or $\tau_i$, for
any $i\ge 1$. We only need to go back to the expression of each
$\sigma_i$ and $\tau_i$ in terms of the finite set of generators
$x_0,x_1,\sigma_1,\tau_1$, and verify that, if $i\le n$, each
$\sigma_i$ or $\tau_i$ can be written with a number of generators
which is at most linear in $n$.  Thus substitution gives the stated upper bound on word length.
\end{proof}

 Examples \ref{x1tau1} and \ref{garside} from the previous section
show that the number of crossings may grow cubically or
quadratically with word length. So to find lower bounds, we need to
consider quantities  in the diagram which are more closely related
to the length than the number of crossings.  These quantities are
the number of nodes and the maximum number of crossings between any
pair of strands.

We let $w$ be an element given in a reduced tree-braid-tree diagram representative, and
let $n$ be the number of nodes. We let $s_{ij}$, for
$i,j=1,2,\ldots,n+1$ be the number of times the $i$-th and $j$-th
strands cross each other, and we set the maximum number of crossings between
any pair of strands as
$$
s=\max\{s_{ij}\,|\,i,j=1,2,\ldots,n+1\}.
$$

First, we remark that the number of nodes in a reduced diagram gives a
lower bound for the word length.

\begin{prop} Given an element  $w$ in $BV$ or $\widehat{BV}$  which
has a reduced diagram representative with $n$ nodes then there
exists a constant $C$ such that the length $|w|$  is at least $Cn$.
\end{prop}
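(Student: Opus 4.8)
The plan is to prove the contrapositive growth bound directly: that the node count of a reduced diagram can grow at most linearly in word length. For an arbitrary element $u$, write $N(u)$ for the number of nodes in each of the two trees of its unique reduced tree-braid-tree diagram (both trees have the same number of nodes), so that $N(w)=n$ for the element $w$ of the statement. It suffices to produce a constant $K$, depending only on the finite generating set $\{x_0,x_1,\sigma_1,\tau_1\}$, such that $N(ug)\le N(u)+K$ for every element $u$ and every generator $g$ or its inverse. Granting this, an induction on word length starting from $N(1)=0$ gives $N(w)\le K\,|w|$, and hence $|w|\ge C\,N(w)=Cn$ with $C=1/K$, which is the assertion.

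To establish the per-generator bound, fix $u$ with reduced diagram given by trees $T_1,T_2$ (each with $N(u)$ nodes) and a braid, and fix a generator $g$ (or its inverse) with reduced diagram given by trees $S_1,S_2$ and a braid, where each of $S_1,S_2$ has $m\le K$ nodes and $K$ is the largest number of nodes occurring in the reduced diagram of any generator or its inverse. Forming the product $ug$ requires making the bottom tree $T_2$ of $u$ and the top tree $S_1$ of $g$ coincide by splitting strands, which amounts to passing to their least common refinement $R=T_2\vee S_1$. The central point is the refinement bound $|R|\le |T_2|+|S_1|=N(u)+m$: viewing each binary tree as its (prefix-closed) set of internal nodes inside the infinite rooted binary tree, the internal nodes of $R$ are exactly the union of those of $T_2$ and of $S_1$, whence $|R|\le|T_2|+|S_1|$.

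Each split that enlarges $T_2$ toward $R$ simultaneously grafts one caret onto the top tree $T_1$ and splits the corresponding braid strand, so after this expansion $u$ is represented by a diagram whose two trees each have $|R|$ nodes; symmetrically, expanding $S_1$ to $R$ enlarges $S_2$ to a tree of $|R|$ nodes. Cancelling the now-common middle tree $R$ and concatenating the braids yields a (not necessarily reduced) tree-braid-tree diagram for $ug$ whose two trees each have $|R|\le N(u)+m$ nodes. Since reduction, which collapses matched carets and their parallel strands, can only decrease the node count, we obtain $N(ug)\le |R|\le N(u)+m\le N(u)+K$, as required. The identical argument applies to $\widehat{BV}$, where $g$ ranges over the smaller set $\{x_0,x_1,\sigma_1\}$ and no $\tau_i$ occur, so the same constant works.

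The step I expect to require the most care is the propagation of the refinement across the braid: one must verify that expanding the bottom tree of $u$ to the common refinement $R$ forces precisely the matching expansion of the top tree $T_1$ together with a corresponding splitting of strands, so that the product diagram genuinely has both trees of size $|R|$, and that the ensuing reduction cannot increase this count. Once this bookkeeping of splits through the braid is pinned down, the constant $K=\max_g|S_1(g)|$ is explicit and small for the standard generators, and the linear lower bound follows at once.
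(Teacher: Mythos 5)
Your proof is correct and takes essentially the same approach as the paper: the paper's (very terse) argument is simply that multiplication by any generator can add at most two nodes to a reduced tree-braid-tree diagram, so an element with $n$ nodes requires at least $n/2$ generators. Your common-refinement bookkeeping is a careful justification of exactly that per-generator bound, differing only in the value of the constant (your $K=\max_g|S_1(g)|$ versus the paper's sharper ``at most two,'' obtained by noting both trees share the root caret), which does not affect the conclusion.
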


This follows by standard arguments analogous to those
used for other groups of the Thompson
class. In the case of $BV$,
multiplication by any of the generators can add at most two nodes
to a reduced tree braid tree diagram, so an element with $n$ nodes will
require at least $\frac{n}2$ generators.

Furthermore, the maximal
number of crossings for a pair of strands also gives a lower bound
for the metric.

\begin{prop} \label{strandcross} Given an element  $w$ in $BV$ or $\widehat{BV}$
which has a reduced tree-braid-tree diagram representative where $s$ is the maximal
number of crossings of any pair of strands. Then the length $|w|$
is at least $s$.
\end{prop}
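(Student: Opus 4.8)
The plan is to show that the maximal pairwise crossing number cannot grow by more than one under multiplication by a single generator, and then to induct on word length. The essential preliminary observation concerns the behaviour of pairwise crossing numbers under the splitting move. If a strand is split into two parallel strands, then any other strand that previously crossed it $c$ times now crosses each of the two children exactly $c$ times, while the two children never cross each other. Thus splitting introduces new pairs of strands, but every pairwise crossing number is either inherited from an old one or is $0$, so $\max_{i,j}s_{ij}$ is unchanged; collapsing, being the inverse move, preserves it as well. This is precisely where the distinction between total crossings and the \emph{maximal pairwise} crossing number becomes decisive: splitting can multiply the total number of crossings, as in Example \ref{x1tau1}, while leaving $s$ fixed.

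With this in hand, let $u=g_1g_2\cdots g_L$ be a geodesic word for $w$, so $L=|w|$, and build a single tree-braid-tree diagram $D$ by multiplying out the generator diagrams one at a time, splitting strands as needed to make the bottom tree of each partial product agree with the top tree of the next generator. I claim that in $D$ every pair of strands crosses at most $L$ times, and I prove this by induction on the number of factors multiplied in. The generators $x_0^{\pm1},x_1^{\pm1}$ lie in the copy of $F$ and contribute no crossing, while the matching splits preserve every pairwise count by the previous paragraph. The generators $\sigma_1^{\pm1},\tau_1^{\pm1}$ contribute a single crossing; even when the two crossed leaves must be split in forming the common refinement, so that the lone crossing becomes a rectangular block of crossings between two bundles of parallel strands, each individual pair of strands receives at most one new crossing, and strands within a common bundle receive none. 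Hence each braid generator raises every pairwise crossing number by at most one, and there are at most $L$ braid generators among the $g_t$.

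Finally, the reduced tree-braid-tree representative of $w$ is obtained from $D$ by collapsing parallel strands down to the minimal-node diagram, together with any braid simplifications. Collapsing preserves pairwise crossing numbers and can only reduce the set of strand pairs over which the maximum is taken, and any isotopy simplification can only decrease crossing numbers, so no pairwise crossing number in the reduced diagram exceeds its value in $D$. Therefore $s\le L=|w|$, which is the assertion. The same argument applies verbatim in $\widehat{BV}$, omitting the generator $\tau_1$.

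The step I expect to require the most care is the analysis of the matching splits in the inductive construction of $D$. I must verify carefully that turning the single crossing of a generator $\sigma_i$ or $\tau_i$ into a block of crossings between two split bundles still adds at most one crossing to each fixed pair of strands, since it is exactly this phenomenon that produces the large total crossing counts of Example \ref{x1tau1} and could, if mishandled, appear to inflate $s$. Once this bookkeeping is pinned down, the bound follows cleanly from the invariance of $s$ under the splitting and collapsing moves.
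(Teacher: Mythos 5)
Your proof is correct and follows essentially the same route as the paper: you isolate the key fact that splitting strands cannot increase any pairwise crossing number (this is the paper's Lemma on representatives of $\sigma_1$ and $\tau_1$), conclude that each braid generator adds at most one crossing to any fixed pair of strands while $x_0^{\pm1},x_1^{\pm1}$ add none, and then induct along a geodesic word. The paper's argument is just a terser version of your bookkeeping, so there is nothing substantive to add.
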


The proof becomes straightforward once we observe that
representatives of the generators $x_0$ and $x_1$ never have any
crossings, and representatives for the braid generators are
restricted by the following lemma.

\begin{lemma} \label{taumaxs}
In any tree-braid-tree representative of $\tau_1$ or $\sigma_1$ with $n$ nodes, any
pair of strands cross only once. That is, for any representative of
$\tau_1$ or $\sigma_1$, we have $s=1$.
\end{lemma}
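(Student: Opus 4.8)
The plan is to use the structure of the equivalence relation recalled above: each element has a unique minimal tree-braid-tree representative, and any other representative of the same element is obtained from the minimal one by a finite sequence of splitting operations, each of which replaces a matched pair of leaves by carets and replaces the single strand joining them by two \emph{parallel} strands. Since one cannot pass below the minimal representative by collapsing, every tree-braid-tree representative of $\sigma_1$ (respectively $\tau_1$) is reached from its minimal representative purely by splittings. I would therefore argue by induction on the number of splittings, taking as the base case the defining single-crossing diagram.

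First I would fix the invariant to be preserved, namely that in the diagram any two strands cross at most once, and that two strands which arise as parallel copies of a single original strand under successive splittings never cross each other. For the minimal representatives of $\sigma_1$ and $\tau_1$ the braid is a single crossing between one pair of strands with no other crossings, so the invariant holds trivially and $s=1$. Next I would verify that one splitting preserves the invariant. Splitting a strand $P$ produces two strands $P_1,P_2$ following the same path through the braid; being parallel, $P_1$ and $P_2$ do not cross each other, and for every other strand $Q$ each of $P_1,P_2$ crosses $Q$ exactly as often as $P$ did. Hence each pairwise crossing number involving a new strand inherits the bound already in force.

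The point worth stating carefully is that although splitting can increase the \emph{total} number of crossings, as the motivating examples show, it never pushes the crossing number of a single pair above one: a new crossing is only ever a parallel copy of a pre-existing crossing between two \emph{distinct} original strands, and copies of one strand among themselves remain mutually parallel and hence disjoint. Combining the base case with this inductive step gives $s\le 1$ for every representative. Since $\sigma_1$ and $\tau_1$ are nontrivial braids, their single crossing survives every splitting, so at least one pair of strands still crosses and $s\ge 1$; therefore $s=1$.

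The one genuinely delicate part is the bookkeeping in the inductive step: one must confirm that the pair of copies created by a split is truly non-crossing, and that splitting one strand does not indirectly force two copies of a different strand to cross. I expect this to be the main obstacle, and I would dispatch it by appealing directly to the definition of the splitting operation as insertion of parallel strands, which by construction leaves the cyclic order of strands along the braid unchanged and so creates no crossing among copies of a common strand.
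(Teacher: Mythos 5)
Your proposal is correct and follows essentially the same route as the paper's own proof: both start from the reduced single-crossing diagram for $\sigma_1$ or $\tau_1$, observe that every other representative arises from it by splitting strands into parallel copies, and note that splitting preserves the property that any pair of strands crosses at most once. Your version merely makes explicit the induction and the invariant (parallel copies never cross each other) that the paper leaves implicit.
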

\begin{proof}
The reduced representatives for $\sigma_1$ and $\tau_1$
have one single crossing. We obtain other representatives  from
this one by splitting strands. But if a pair of strands
cross only once, after the process of splitting any strand, in the
resulting braid, any given pair of strands still cross only once.
\end{proof}

Hence, Proposition \ref{strandcross} follows since representatives for $x_0$ and
$x_1$ have no crossings, and each time we multiply by $\tau_1$ or
$\sigma_1$, the corresponding generator we use will add at most one
crossing to each pair of strands.

There is an elementary relation between the total number of
crossings $k$ and  $s$, the maximal number of crossings of any pair of
strands, which is given multiplying by the number of pairs, ${{n+1}
\choose 2}$. Hence, we have that
$$
k\le \frac{n(n+1)}2 s.
$$

This leads to a lower bound on word length in terms of the number of crossings as follows:

\begin{prop} \label{cubicbound} There exists a constant $C$ such that if an
element  $w$ of $BV$ or $\widehat{BV}$ has length $N$, the number of crossings on its reduced tree-braid-tree
diagram representative satisfies $k\le CN^3$.
\end{prop}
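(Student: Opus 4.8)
The plan is to combine the three quantitative relations already established in the excerpt into a single chain of inequalities bounding $k$ by a cubic in the word length $N$. The three ingredients are: the elementary counting bound $k \le \frac{n(n+1)}{2}\,s$ relating total crossings to the maximal per-pair crossing number; Proposition \ref{strandcross}, which gives $s \le |w| = N$; and the earlier Proposition bounding the number of nodes, which gives $n \le 2N$ (since every generator adds at most two nodes to a reduced diagram, so a reduced representative of an element of length $N$ has at most $2N$ nodes). The goal is purely to assemble these into $k \le C N^3$.

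First I would fix a reduced tree-braid-tree representative of $w$ realizing its minimal number of nodes $n$ and with $k$ crossings, and record the parameter $s$ as in the definition above. Then I would invoke Proposition \ref{strandcross} to conclude $s \le N$, and invoke the node bound to conclude $n \le 2N$. Substituting both into the elementary inequality $k \le \frac{n(n+1)}{2}\,s$ yields
$$
k \le \frac{n(n+1)}{2}\,s \le \frac{2N(2N+1)}{2}\,N = N^2(2N+1) \le 3N^3
$$
for $N \ge 1$, so the conclusion holds with $C = 3$ (or any convenient constant absorbing the lower-order term).

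One point that needs a moment's care is that the three propositions are each stated for \emph{some} reduced representative, and one must make sure they can be applied to the same diagram. Since every reduced tree-braid-tree diagram for $w$ has the same minimal number of nodes and the per-pair crossing bound $s\le N$ holds for any representative built from the generators, there is no conflict: all three inequalities hold simultaneously for the chosen reduced representative. I do not expect a genuine obstacle here—the real content was already distilled into Propositions \ref{strandcross} and the node bound, together with the combinatorial counting inequality, so this final statement is essentially a bookkeeping corollary. The only thing to watch is that the constant $C$ must be chosen to absorb the linear and quadratic terms uniformly for all $N\ge 1$, which is immediate.
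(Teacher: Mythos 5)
Your proof is correct and follows essentially the same route as the paper: the paper's own argument also observes that $n$ grows by at most $2$ and $s$ by at most $1$ per generator (the content of the two earlier propositions you cite) and then concludes via the counting inequality $k\le \frac{n(n+1)}{2}s$. Your version merely makes the constants and the citations explicit, which is fine.
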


\begin{proof} We consider what happens when we multiply
by a generator. The number of nodes $n$ grows by at most 2, since a
generator has at most 3 nodes and every element has at least the
root node. And in view of Lemma \ref{taumaxs}, $s$ can grow by only
1. Hence both $n$ and $s$ grow linearly with length, and hence, the
inequality linking  the total number of crossings $k$ to them
finishes the proof.
\end{proof}

Example \ref{x1tau1}, where the total number of crossings grows cubically with word length, shows that the cubic bound in Proposition \ref{cubicbound} is optimal.

From here we deduce these final results for the bounds, where we give
lower and upper bounds for the metric based on the quantities $k$ and
$s$.

\begin{thm}
For an element $w$ of $BV$ or $\widehat{BV}$ in tree-braid-tree form with $n$ nodes, $k$ total crossings, and with the maximum number of crossings of a pair of strands is $s$, there are constants
$C_1, C_2,C_3,C_4$ for which the metric satisfies the following
inequalities:
$$
C_1\max\{n,\sqrt[3]k\}\le C_2\max\{n,s\}\le |w|\le C_3(n+nk)\le
C_4(n+n^3s)
$$
\end{thm}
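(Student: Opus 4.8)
The plan is to read this statement as a synthesis of the results already established in this section together with two elementary numerical inequalities. Reading the chain from the middle outward, the two central links are essentially restatements of earlier results, while the two outer links follow from the basic relation $k\le \binom{n+1}{2}s$ derived above. So the only work is to verify each of the four inequalities individually and then absorb everything into the four constants $C_1,\dots,C_4$ by a single consistent choice.

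For the lower bound $C_2\max\{n,s\}\le|w|$, I would simply combine the two preceding propositions: the node bound gives $|w|\ge Cn$ and Proposition \ref{strandcross} gives $|w|\ge s$. Taking the larger of these two lower bounds yields $|w|\ge \max\{Cn,s\}$, which dominates $C_2\max\{n,s\}$ once $C_2=\min\{C,1\}$ (checking the cases $n\ge s$ and $s>n$ separately). The upper bound $|w|\le C_3(n+nk)$ is exactly the first theorem of this section, so nothing new is required there; I would just quote it.

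The rightmost inequality is immediate from $k\le \binom{n+1}{2}s\le n^2 s$ for $n\ge 1$: multiplying by $n$ gives $nk\le n^3 s$, so $n+nk\le n+n^3 s$ and one may even take $C_4=C_3$. The leftmost inequality is the only link needing a short argument. Since $n\le\max\{n,s\}$ trivially, it remains to control $\sqrt[3]{k}$. Using $k\le n^2 s$ gives $\sqrt[3]{k}\le n^{2/3}s^{1/3}$, and a two-case comparison according to whether $s\le n$ or $s>n$ shows $n^{2/3}s^{1/3}\le\max\{n,s\}$ — this is just the fact that a weighted geometric mean lies below the maximum of its arguments. Hence $\max\{n,\sqrt[3]{k}\}\le\max\{n,s\}$, and the leftmost link holds with $C_1=C_2$.

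The only genuinely delicate point, and it is bookkeeping rather than a real obstacle, is keeping the four constants mutually consistent across the whole string, since each invoked proposition carries its own implicit constant. I expect no difficulty here: once the individual links are verified, choosing the smallest constant on the left side and the largest on the right makes the entire chain of inequalities hold simultaneously for every reduced tree-braid-tree representative.
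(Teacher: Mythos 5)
Your proof is correct and follows essentially the same route as the paper, which presents this theorem as a direct synthesis of the preceding results (the node lower bound, Proposition \ref{strandcross}, the upper bound theorem, and the relation $k\le\binom{n+1}{2}s$). Your verification of the two outer links via $k\le n^2s$ and the geometric-mean comparison is exactly the bookkeeping the paper leaves implicit.
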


\bibliographystyle{plain}

\end{document}